\renewcommand{\subsection}[1]{\vspace{3mm}\refstepcounter{subsection}\noindent{\bf \thesubsection. #1.} }
\newcommand{\np}{\vspace{3mm}\refstepcounter{subsection}\noindent{\bf \thesubsection. }}
\renewcommand{\subsubsection}[1]{\vspace{3mm}\refstepcounter{subsubsection}\noindent{\bf \thesubsubsection. #1.} }
\numberwithin{equation}{section}
\newcommand{\Hom}{\operatorname{Hom}}
\renewcommand{\geq}{\geqslant}
\renewcommand{\leq}{\leqslant}
\newcommand{\Osh}{{\mathcal O}}                        
\renewcommand{\H}{\mathrm{H}}                          
\newcommand{\K}{\mathrm{K}}                
\newcommand{\M}{\mathrm{M}}
\newcommand{\N}{\mathrm{N}}            
\newcommand{\kk}{\mathbf{k}}
\newcommand{\Vol}{\operatorname{Vol}}
\newcommand{\PP}{\mathbb{P}} 
\newcommand{\RR}{\mathbb{R}} 
\newcommand{\ZZ}{\mathbb{Z}} 
\newtheorem{theorem}{Theorem}[section]
\newtheorem{proposition}[theorem]{Proposition}
\theoremstyle{definition}
\begin{document}

\title[Generalized torus invariant GCD]{
Generalized GCD for toric Fano varieties}
\author{Nathan Grieve}
\address{Department of Mathematics, Michigan State University,
East Lansing, MI, USA, 48824}
\email{grievena@msu.edu}%

\begin{abstract} 
We study the greatest common divisor problem for torus invariant blowing-up morphisms of nonsingular toric Fano varieties.  Our main result applies the theory of Okounkov bodies together with an arithmetic form of Cartan's Second Main theorem, which has been established by Ru and Vojta.  It also builds on Silverman's geometric concept of greatest common divisor.  As a special case of our results, we deduce a bound for the generalized greatest common divisor of pairs of nonzero algebraic numbers.
\end{abstract}
\thanks{Keywords:  Greatest common divisor; Okounkov bodies; Fano Toric Varieties.}
\thanks{\emph{Mathematics Subject Classification (2010):} Primary 14G25.  Secondary 14C20; 14J45.}

\maketitle

\section{Introduction}

\np  Let $\kk$ be a number field, $X$ a nonsingular projective variety over $\kk$ and $Y \subsetneq X$ a nonsingular codimension $r$ subvariety, $r \geq 2$.  As observed by Silverman \cite{Silverman:2005}, the height function $h_E(\cdot)$ for $E$ the exceptional divisor of
$$
\pi \colon X' = \operatorname{Bl}_Y(X) \rightarrow X
$$
the blowing-up of $X$ along $Y$ can be interpreted as a generalized logarithmic greatest common divisor.  In particular, it is of interest to obtain bounds for $h_E(\cdot)$.

\np  The purpose of the present note, is to obtain results of this flavour for torus invariant blowing-up morphisms.  These results are consequences of the Main Arithmetic General Theorem, \cite{Ru:Vojta:2016}, together with the theory of polytopes for torus invariant divisors.  

\np  Our main result is stated as Theorem \ref{unconditional:arithmetic:general:theorem:gcd:bound}, which should also be viewed within the context of our more computational algorithmic point of view which we undertake in Sections \ref{Toric:Calculations} and \ref{Torus:expected:order:vanishing}.  Within the context of toric varieties, Theorem \ref{unconditional:arithmetic:general:theorem:gcd:bound} becomes rather explicit and algorithmic.  Indeed, developing that point of view is the content of Section \ref{Torus:expected:order:vanishing}.

\np As one special case of what we do here, we obtain a result which gives an unconditional upper bound for the greatest common divisor of pairs of nonzero algebraic numbers.  We state that result as Theorem \ref{gcd:bound:intro:thm'}.  It is a consequence of a more general result (see Theorem \ref{unconditional:arithmetic:general:theorem:gcd:bound}) together with Proposition \ref{beta:gcd:prop}.  

\np  The proof of Proposition \ref{beta:gcd:prop} involves calculations with Okounkov bodies; see Sections \ref{Toric:Calculations} and \ref{Torus:expected:order:vanishing}.  We believe that these methods are also of an independent interest.   Indeed, they may be seen, for example, as special cases of our related more general results that we obtained in \cite{Grieve:chow:approx}.  They also have an interpretation in terms of certain invariants that arise within the context of K-stability.  For a more detailed discussion about those topics, we refer to the works \cite{Grieve:2018:autissier}, \cite{Donaldson:2002}, \cite{Boucksom-Hisamoto-Jonsson:2016} and the references therein. 

\section{Arithmetic Preliminaries}

\np  We fix arithmetic notation and conventions which resemble those of \cite{Bombieri:Gubler}.  Let $\kk$ be a number field and $M_\kk$ its set of places.  We choose absolute values $|\cdot|_v$, for each $v \in M_\kk$, in such a way that the product formula holds true with multiplicities equal to one.  We also let $S$ be a fixed finite set of places.%

\np 
If $D$ is a Cartier divisor on an (irreducible) projective variety $X$ over $\kk$ then we denote the local Weil function of $D$ with respect to a place $v \in M_\kk$ by 
$$\lambda_{D}(\cdot,v) = \lambda_{D,v}(\cdot).$$  
We also denote the logarithmic height function of such a divisor by 
$$h_{D}(\cdot) = h_{\Osh_X(D)}(\cdot).$$
Again, we refer to \cite{Bombieri:Gubler} for more details about Weil and height functions.

\np  In our present notation, the greatest common divisor of a pair of nonzero integers 
$0 \not =  \alpha,\beta \in \ZZ$
is governed by the formula
$$
\log \operatorname{gcd}( \alpha,\beta )  := \sum_{\text{prime numbers } p} \min \{ \operatorname{ord}_p( \alpha), \operatorname{ord}_p(\beta) \} \log p 
$$
\cite{Silverman:2005}.  More generally, it makes sense to define the \emph{generalized logarithmic greatest common divisor} for pairs of nonzero algebraic numbers
$\alpha, \beta \in \kk^\times$. 
To this end, as in \cite[page 337]{Silverman:2005}, we put
\begin{equation}\label{log:gcd:defn}
h_{\operatorname{gcd}}(\alpha,\beta) := \sum_{v \in M_\kk} \min \{ \max\{ -\log |\alpha|_v,0\}, \max \{  -\log |\beta|_v, 0\} \} \text{.}
\end{equation}
In particular, for pairs of nonzero integers $0 \not = \alpha,\beta \in \ZZ$,  
 it follows from \eqref{log:gcd:defn} that
$$
h_{\operatorname{gcd}}(\alpha,\beta) = \log \operatorname{gcd}(\alpha,\beta) \text{.}
$$

\np  There is an important \emph{geometric interpretation} of the generalized greatest common divisor  \eqref{log:gcd:defn}.  This point of view is emphasized by Silverman, for example \cite{Silverman:2005} and \cite[page 204]{Silverman:1987} (see also \cite[Proposition 3]{Yasufuku:2011} and \cite[Proposition 5]{Yasufuku:2012}).  For example, let
$$
S(\Sigma) := \PP^1_{\kk} \times \PP^1_{\kk}
$$
and 
$$
\pi \colon \operatorname{Bl}_{\{\mathrm{pt \}}}(S(\Sigma)) \rightarrow S(\Sigma)
$$
the blowing-up of $S(\Sigma)$ at the origin 
$\{\mathrm{pt} \} \in S(\Sigma)$
with exceptional divisor $E$.  

Fixing local height functions $\lambda_{E,v}(\cdot)$, for each $v \in M_\kk$, it was noted in \cite{Silverman:2005} that if 
$$
(\alpha,\beta) := [1:\alpha] \times [1 : \beta] \in S(\Sigma) \text{, }
$$
with $\alpha,\beta \in \kk^\times$, then
\begin{align*}
h_{\operatorname{gcd}}(\alpha,\beta)  = \sum_{v \in M_\kk} \lambda_{E,v}(\pi^{-1}(\alpha,\beta)) + \mathrm{O}(1)  = h_E(\pi^{-1}((\alpha,\beta))) \text{.}
\end{align*}

\np  Our arithmetic results are consequences of the Main Arithmetic General Theorem \cite{Ru:Vojta:2016}.  

\begin{theorem}[{\cite{Ru:Vojta:2016}}]\label{main:arithmetic:general:thm}
Let $X$ be a projective variety over a number field $\kk$.  Let $D_1,\dots,D_q$ be nonzero effective Cartier divisors on $X$, defined over $\kk$.  Assume that the divisors $D_1,\dots,D_q$ intersect properly and put $D = D_1 + \dots + D_q$.  Let $L$ be a big line bundle on $X$ and defined over $\kk$.  Let $S \subseteq M_\kk$ be a finite set of places.  Then, for each $\epsilon > 0$, the inequality
$$
\sum_{v \in S} \lambda_D(x,v) \leq \left( \max_{1 \leq j \leq q} \left\{ \frac{1}{\beta(L,D_j)} \right\} + \epsilon \right) h_{L}(x) + \mathrm{O}(1) 
$$
holds true for all $\kk$-rational points $x \in X(\kk)$ outside of some proper Zariski closed subset of $X$.
\end{theorem}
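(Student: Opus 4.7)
The plan is to reduce the inequality to an application of the arithmetic Schmidt Subspace Theorem (or equivalently the arithmetic Cartan Second Main Theorem in the Schlickewei--Evertse--Ferretti form), following the strategy that originates in the proof of \cite{Ru:Vojta:2016}. The $\beta$-constant is most naturally interpreted as
$$\beta(L, D_j) = \liminf_{N \to \infty} \frac{\sum_{m \geq 1} h^0(X, NL - mD_j)}{N \cdot h^0(X, NL)}\text{,}$$
measuring an average normalized order of vanishing of sections of $NL$ along $D_j$. The first step is to fix a sufficiently large integer $N$ for which $NL$ has many sections and for which, for every $j$, the displayed ratio lies within $\epsilon$ of $\beta(L, D_j)$.

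For each $j \in \{1,\dots,q\}$ I would then form the decreasing filtration $V_j(m) := H^0(X, NL - mD_j)$ of $V := H^0(X, NL)$ and choose a basis $\mathcal{B}_j$ of $V$ adapted to it. Writing $\mu_j(s)$ for the largest $m$ with $s \in V_j(m)$, the trace identity
$$\sum_{s \in \mathcal{B}_j} \mu_j(s) = \sum_{m \geq 1} \dim V_j(m)$$
combined with the previous step gives $\sum_{s \in \mathcal{B}_j} \mu_j(s) \geq (\beta(L,D_j) - \epsilon)\, N \cdot \dim V$. Place by place, the local Weil function $\lambda_{D_j}(\cdot, v)$ is bounded above, up to $O(1)$, by a weighted combination of the section Weil functions $\lambda_{(s)}(\cdot, v)$ with weights $\mu_j(s)/N$.

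The key step is to apply the arithmetic Subspace Theorem to the projective embedding of $X$ attached to a basis of $V$, selecting, at each $v \in S$, the system of hyperplanes coming from $\mathcal{B}_{j(v)}$, where $j(v)$ is the index maximizing the local contribution $\beta(L, D_{j(v)})^{-1} \lambda_{D_{j(v)}}(x, v)$. Dividing through by $N$ and letting $N \to \infty$ consolidates the left-hand weights for each $j$ into $\beta(L, D_j)$, and combining with the $\max$ over $v$-dependent choices yields the stated bound.

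The main obstacle is ensuring that the bases $\mathcal{B}_j$ so constructed are in sufficiently general position, relative to the image of $X$, for the arithmetic Subspace Theorem to produce a \emph{proper} Zariski closed exceptional locus rather than all of $X$. This is where the proper intersection hypothesis on $D_1,\dots,D_q$ enters decisively, since it guarantees that the base loci of the relevant graded pieces do not sweep out $X$ and that the sections singled out by the filtrations remain linearly general in the required sense. Tracking the $\epsilon$-error as $N \to \infty$, and commuting the place-dependent choice of $j(v)$ with the eventual exceptional set, are the other technical points requiring care.
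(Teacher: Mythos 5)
The paper does not prove this theorem: it is imported verbatim from Ru and Vojta (hence the attribution in the theorem header), and the surrounding paragraphs only unpack the definition of the $\beta$-constant in \eqref{beta:eqn} and remark that this form has also appeared in \cite{Ru:Wang:2016} and \cite{Grieve:2018:autissier}. There is therefore no internal proof here to compare against; your sketch is really an outline of the external Ru--Vojta argument.

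As such an outline it identifies the right broad strategy (filtrations of the section spaces $H^0(X,NL)$ together with the arithmetic Subspace Theorem), but several load-bearing steps are left as assertions. The opening identity
\begin{equation*}
\beta(L, D_j) = \liminf_{N \to \infty} \frac{\sum_{m \geq 1} h^0(X, NL - mD_j)}{N \cdot h^0(X, NL)}
\end{equation*}
is the crucial bridge between the volume integral in \eqref{beta:eqn} and the filtration machinery, and it requires proof (it is an asymptotic Riemann--Roch type computation). You also misidentify the role of the proper-intersection hypothesis: its principal job is not to make the bases $\mathcal{B}_j$ linearly general, but to bound how many of the divisors $D_j$ can pass through a common point, so that at each place $v$ and each candidate point $x$ only $\leq \dim X$ of the $\lambda_{D_j}(x,v)$ can be large; this bounded overlap is what lets one reduce the global sum over $j$ to a single filtration bound for a ``worst'' subset of indices, and it is the combinatorial heart of the argument. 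Finally, the closing paragraph about ``commuting the place-dependent choice of $j(v)$ with the eventual exceptional set'' and ``dividing through by $N$'' glosses over a real order-of-limits issue: each invocation of the Subspace Theorem produces an exceptional set depending on $N$, and organizing the estimate so that a single proper Zariski closed set works for the final inequality is precisely where the technical bulk of \cite{Ru:Vojta:2016} lies. Filling these gaps would essentially reproduce that paper, which is why the present one cites the result rather than reproving it.
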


\np\label{general:thm:assumptions}
Here, in Theorem \ref{main:arithmetic:general:thm}, 
\begin{equation}\label{beta:eqn}
\beta(L,D_j) := \int_0^\infty \frac{\Vol(L - t D_j ) }{ \Vol(L) } \mathrm{d}t \text{, }
\end{equation}
for $j = 1,\dots,q$, denotes the expected order of vanishing of $L$ along $D_j$.  This form of Theorem \ref{main:arithmetic:general:thm}, stated using the asymptotic volume constants \eqref{beta:eqn}, has also been noted, for example, in  \cite[Theorem 1.8]{Ru:Wang:2016}  and \cite[Theorem 1.1]{Grieve:2018:autissier}.  Finally, that the divisors $D_1,\dots,D_q$ intersect properly, means that for each subset 
$I \subseteq \{1,\dots,q\}$ 
and all 
$x \in \bigcap_{i \in I} \operatorname{Supp} D_i$, 
the sequence 
$(f_i)_{i \in I}$,
for $f_i$ a local defining equation for $D_i$, is a regular sequence in $\Osh_{X,x}$, the local ring of $X$ at $x$.

\section{The arithmetic general theorem for Fano varieties and consequences for greatest common divisors}\label{arithmetic:general:thm:gcd}

\np
In this section, we discuss the geometric aspect to the generalized greatest common divisor problem.  What we discuss here is mostly based on  \cite{Silverman:2005}.  In particular, our main aim is to derive the inequality \eqref{gcd:eqn12} below.  This is an unconditional gcd bound, with more restrictive assumptions.  It may be seen by analogy with that which was observed in \cite[Theorem 6]{Silverman:2005}.  In what follows, when no confusion is likely, we identify Cartier divisors with linear equivalence classes thereof.  On the other hand, note that in applications of the Main Arithmetic General Theorem (Theorem \ref{main:arithmetic:general:thm}), for example, it is important to make the distinction between a given Cartier divisor and its linear equivalence class.

\np  Let $X$ be a nonsingular (irreducible) projective variety, over a number field $\kk$, and let 
$Y \subsetneq X$ 
be a nonsingular codimension $r$ subvariety, $r \geq 2$.  Let 
\begin{equation}\label{gcd:eqn1}
\pi \colon X' = \operatorname{Bl}_Y(X) \rightarrow X
\end{equation}
be the blowing-up of $X$ along $Y$ with exceptional divisor $E$.  Then $X'$ is nonsingular and, furthermore, the respective canonical divisor classes of $X$ and $X'$ are related by the formula
\begin{equation}\label{gcd:eqn2}
\K_{X'} = \pi^* \K_X + (r-1)E.
\end{equation}

\np\label{transversality:Vojta:assumption}  Hencefourth, we impose the following two assumptions
\begin{enumerate}
\item[(i)]{there exist nonzero effective Cartier divisors $D_1,\dots,D_q$ on $X$ for which
$$
-\K_X \sim_{\text{lin. equiv.}} D_1+\dots+D_q 
$$
and which have the property that the divisors $\pi^*D_1,\dots,\pi^*D_q$ intersect properly on $X'$; and
}
\item[(ii)]{
the divisor $-\K_{X}$ is big.
}
\end{enumerate}

\np In particular, we may assume that
$$
-\pi^*\K_X = \pi^*D_1 + \dots + \pi^*D_q \text{;}
$$
set
\begin{equation}\label{gcd:eqn4}
\gamma := \max_{1 \leq j \leq q} \left\{ \frac{1}{\beta(-\K_{X'},\pi^*D_j)} \right\} \text{.}
\end{equation}
Fix a real number $\delta \geq 0$, which depends on the given choice of divisors $D_1,\dots,D_q$, and which satisfies the condition that
\begin{equation}\label{gcd:eqn5}
\gamma \leq 1 + \delta.
\end{equation}

\np  Let $\epsilon > 0$.  By the Main Arithmetic General Theorem \cite{Ru:Vojta:2016}, which applies because of our assumptions (i) and (ii), in particular, because the divisors $\pi^*D_1,\dots, \pi^*D_q$ intersect properly and because $-\K_X$ is big, we have that
\begin{equation}\label{gcd:eqn6}
- \sum_{v \in S} \lambda_{\pi^*\K_X}(v,x') \leq (\gamma + \epsilon) h_{-\K_{X'}}(x') + \mathrm{O}(1) \leq (1 + \delta + \epsilon) h_{-\K_{X'}}(x') + \mathrm{O}(1)
\end{equation}
for all $\kk$-rational points
$
x' \in X'(\kk) \setminus Z'(\kk)
$
and
$
Z' \subsetneq X'
$
some Zariski closed proper subset.  Using the relation \eqref{gcd:eqn2}, in the equivalent form
$$
- \K_{X'} = - \pi^* \K_X - (r-1)E\text{,}
$$
we may rewrite the inequality \eqref{gcd:eqn6} as
\begin{equation}\label{gcd:eqn8}
- \sum_{v \in S} \lambda_{\pi^*\K_X}(v,x') + h_{\K_{X'}}(x') \leq (\delta + \epsilon)h_{-\pi^* \K_X}(x') - (\delta + \epsilon)(r-1)h_E(x') + \mathrm{O}(1) \text{.}
\end{equation}
By rearranging \eqref{gcd:eqn8}, we then obtain that
\begin{equation}\label{gcd:eqn9}
- \sum_{v \in S} \lambda_{\pi^*\K_X}(v,x') + h_{\pi^*\K_X}(x') + (1+\delta+\epsilon)(r-1)h_E(x') \leq (\delta + \epsilon) h_{-\pi^* \K_X}(x') + \mathrm{O}(1) \text{.}
\end{equation}
Recall, that the height function $h_{\pi^*\K_X}(\cdot)$ may be expressed in terms of local Weil functions.  Precisely
\begin{equation}\label{gcd:eqn10}
h_{\pi^*\K_X}(\cdot) = \sum_{v \in S} \lambda_{\pi^* \K_X}(v,\cdot) + \sum_{v \in M_{\kk} \setminus S} \lambda_{\pi^* \K_X}(v,\cdot) + \mathrm{O}(1).
\end{equation}
Using this relation \eqref{gcd:eqn10}, the inequality \eqref{gcd:eqn9} takes the form
\begin{equation}\label{gcd:eqn11}
\sum_{v \in M_\kk \setminus S} \lambda_{\pi^* \K_X}(v,x') + (1 + \delta + \epsilon)(r-1) h_E(x') \leq (\delta + \epsilon) h_{-\pi^*\K_X}(x') + \mathrm{O}(1).
\end{equation}
But now, we may use \eqref{gcd:eqn11} to isolate for $h_E(x')$.  In doing so, we obtain the inequality
\begin{equation}\label{gcd:eqn12}
h_E(x') \leq \frac{1}{(1 + \delta + \epsilon)(r-1)} \left( (\delta + \epsilon) h_{-\pi^* \K_X}(x') - \sum_{v \in M_\kk \setminus S} \lambda_{\pi^* \K_X}(v,x')\right) + \mathrm{O}(1) \text{.}
\end{equation}

\np  For later use, we summarize the above discussion in the following way.  

\begin{theorem}\label{unconditional:arithmetic:general:theorem:gcd:bound}
Let $\pi \colon X' \rightarrow X$
be the blowing-up morphism of a nonsingular projective variety $X$ along a nonsingular codimension $r$ subvariety 
$
Y \subsetneq X\text{,}
$
 $r \geq 2$.  Let $E$ be the exceptional divisor of $\pi$.  Assume that: 
 \begin{enumerate}
 \item[(i)]{
 there exist nonzero effective Cartier divisors $D_1,\dots,D_q$ on $X$ which have the two properties that:
 \begin{enumerate}
 \item[(1)]{
 the divisors $\pi^*D_1,\dots,\pi^*D_q$ intersect properly on $X'$; and
 }
 \item[(2)]{
 the anticanonical class of $X$ is represented by $D_1 + \dots + D_q$; and
 }
 \end{enumerate}
 }
 \item[(ii)]{
 the anticanonical class $-\K_X$ is big.
 }
 \end{enumerate}
 In this context, set
 $$
 \gamma := \max_{1 \leq j \leq q} \left\{ \frac{1}{\beta(-\K_{X'}, \pi^* D_j) } \right\}
 $$
 and fix a real number $\delta \geq 0$, as in \eqref{gcd:eqn5}, which depends on the given choice of divisors $D_1,\dots,D_q$, and which satisfies the condition that
 $$
 \gamma \leq 1 + \delta \text{.}
 $$
Let $\epsilon > 0$.  Then, with these assumptions, there exists a
proper Zariski closed subset $Z' \subsetneq X'$ which has the property that the inequality
\begin{equation}\label{gcd:eqn12'}
h_E(x') \leq \frac{1}{(1 + \delta + \epsilon)(r-1)} \left( (\delta + \epsilon) h_{-\pi^* \K_X}(x') - \sum_{v \in M_\kk \setminus S} \lambda_{\pi^* \K_X}(v,x')\right) + \mathrm{O}(1) 
\end{equation}
is valid for all $\kk$-rational points
$
x' \in X'(\kk) \setminus Z'(\kk).
$
\end{theorem}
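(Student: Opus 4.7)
The plan is essentially to package the chain of inequalities \eqref{gcd:eqn6}--\eqref{gcd:eqn12} as the proof of the theorem, so my proposal traces those same steps in a more self-contained way. First I would set $L := -\K_{X'}$, noting that under our assumptions this is a big line bundle on $X'$ (the pullback $-\pi^*\K_X$ is big since $\pi$ is birational and $-\K_X$ is big, and the existence of the effective decomposition through $D_1+\dots+D_q$ is what enters through the $\beta$-constants rather than through bigness of $-\K_{X'}$ itself, which needs a brief check). I would then apply Theorem \ref{main:arithmetic:general:thm} on $X'$ to the divisors $\pi^*D_1,\dots,\pi^*D_q$, whose proper intersection is hypothesis (i)(1), with the target line bundle $L = -\K_{X'}$. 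Since $\pi^*D_1 + \dots + \pi^*D_q \sim -\pi^*\K_X$ by hypothesis (i)(2), the sum on the left-hand side of the conclusion of Theorem \ref{main:arithmetic:general:thm} is precisely $-\sum_{v\in S}\lambda_{\pi^*\K_X}(v,x')$, yielding \eqref{gcd:eqn6} with the constant $\gamma+\epsilon$ bounded above by $1+\delta+\epsilon$ through the choice of $\delta$ in \eqref{gcd:eqn5}.

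Next I would exploit the canonical divisor formula \eqref{gcd:eqn2}, rewritten as $-\K_{X'} = -\pi^*\K_X - (r-1)E$, to convert the height $h_{-\K_{X'}}$ on the right of \eqref{gcd:eqn6} into $h_{-\pi^*\K_X} - (r-1)h_E$, thereby producing \eqref{gcd:eqn8}. A routine rearrangement gives \eqref{gcd:eqn9}. Then I would invoke the standard decomposition \eqref{gcd:eqn10} of $h_{\pi^*\K_X}$ as a sum of local Weil functions over all places, splitting off the contributions from $v \in S$ against those from $v \in M_\kk \setminus S$, so that the terms involving $\sum_{v\in S}\lambda_{\pi^*\K_X}$ cancel. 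What remains is \eqref{gcd:eqn11}, in which the left-hand side contains $(1+\delta+\epsilon)(r-1)h_E(x')$ with a positive coefficient (thanks to $r \geq 2$ and $\delta, \epsilon \geq 0$), allowing me to isolate $h_E(x')$ and divide through to obtain \eqref{gcd:eqn12'}. The exceptional Zariski closed subset $Z' \subsetneq X'$ is the one furnished by Theorem \ref{main:arithmetic:general:thm}.

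The main conceptual subtlety, rather than a computational obstacle, is ensuring that Theorem \ref{main:arithmetic:general:thm} genuinely applies in the form used: specifically, that the big line bundle in its statement may be taken to be $-\K_{X'}$ and that the quantities $\beta(-\K_{X'},\pi^*D_j)$ defining $\gamma$ are finite and positive. This hinges on $-\K_{X'}$ being big (so that $\Vol(-\K_{X'}) > 0$ in the denominator of \eqref{beta:eqn}) and on each $\pi^*D_j$ being nonzero effective, which I would remark follows from hypothesis (i) combined with the equality $-\pi^*\K_X = -\K_{X'} + (r-1)E$, provided one notes that bigness is preserved under the particular birational modification and divisor arrangement at hand. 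With those verifications in place, the remainder of the argument is the bookkeeping already carried out in the excerpt, and the proof amounts to stringing together \eqref{gcd:eqn6} through \eqref{gcd:eqn12} with reference to the hypotheses of the theorem.
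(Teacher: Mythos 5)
Your proposal is a faithful re-derivation of the paper's own argument: the paper's stated proof simply points to the chain of manipulations in the discussion preceding the theorem (equations \eqref{gcd:eqn6}--\eqref{gcd:eqn12}), and your steps match exactly --- apply the Ru--Vojta theorem on $X'$ with $L = -\K_{X'}$ and the divisors $\pi^*D_1,\dots,\pi^*D_q$, substitute $\K_{X'} = \pi^*\K_X + (r-1)E$, split $h_{\pi^*\K_X}$ into $S$ and non-$S$ Weil contributions via \eqref{gcd:eqn10}, and isolate $h_E$. So this is essentially the paper's proof, not an alternative route.

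One word of caution about the subtlety you correctly flag but do not actually resolve: for Theorem \ref{main:arithmetic:general:thm} to apply with $L = -\K_{X'}$ one genuinely needs $-\K_{X'}$ to be big, and this does \emph{not} follow automatically from $-\K_X$ being big once the center $Y$ is positive-dimensional (subtracting $(r-1)E$ from the big pullback $\pi^*(-\K_X)$ can destroy bigness); your phrase ``bigness is preserved under the particular birational modification and divisor arrangement at hand'' gestures at this but is not an argument, and the paper glosses over the same point. The clean resolution is that bigness of $-\K_{X'}$ is already built into the theorem's hypotheses: for a finite $\delta$ with $\gamma \leq 1 + \delta$ to exist, each $\beta(-\K_{X'},\pi^*D_j)$ must be a strictly positive real number, and by \eqref{beta:eqn} this forces $\Vol(-\K_{X'}) > 0$, i.e.\ $-\K_{X'}$ is big. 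With that observation in place, the remainder of your bookkeeping is correct and coincides with the source.
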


\begin{proof}
By assumption, the two conditions (i) and (ii) stated in \S \ref{transversality:Vojta:assumption} are satisfied.   Thus, the inequality \eqref{gcd:eqn12'} is implied by the discussion that precedes and includes the inequality \eqref{gcd:eqn12}.
\end{proof}

\np {\bf Remark.}  In \cite{Silverman:2005}, the logarithmic height function $h_{E}(\cdot)$ is called the \emph{generalized greatest common divisor height}.  The inequality \eqref{gcd:eqn12'} is similar to the main strong upper bound for the generalized greatest common divisor heights that are made possible by Vojta's Main Conjecture.  The above two conditions given in \S \ref{transversality:Vojta:assumption}, are closely related to the concept of \emph{admissible pairs} and a logarithmic formulation of Vojta's Main Conjecture \cite[Section 1.2]{Levin:GCD}.  They also should be compared with the discussion of \cite[Section 4]{Silverman:2005}.

\section{Preliminaries about toric varieties}\label{Toric:Calculations}

\np  A main aim of the present article is to obtain an explicit form of Theorem \ref{unconditional:arithmetic:general:theorem:gcd:bound} for the case of nonsingular Fano toric varieties.  In this section, we fix notations and conventions and recall relevant facts, about toric varieties, which are especially important for our purposes here.  For more details, we refer to \cite{Ful:toric} and \cite{CLS}. 

\np
For example, such nonsingular toric varieties
$
X = X(\Sigma) \text{, }
$
defined over a fixed base field $\kk$, are determined by a fan
$
\Sigma \subseteq \N_{\RR}
$
for 
$
\N \simeq \mathbb{Z}^d \text{, }
$
where
$
d := \dim X \text{.}
$
Recall that the condition that $X$ is smooth is equivalent to the condition that the minimal generators of each (strongly convex rational polyhedral) cone 
$\sigma \in \Sigma$ 
form part of a $\mathbb{Z}$-basis for $\N$.
Henceforth, we also put
$
\M := \Hom_{\ZZ}(\N,\ZZ)
$
and
$
\M_{\RR} := \Hom_{\RR}(\N_\RR,\RR) \text{.}
$

\np
Let $v_1,\dots,v_r$ be the primitive generators for the one dimensional cones in $\Sigma$.  Recall, that each $v_i$ corresponds to a prime torus invariant divisor $D_i$, for $i = 1, \dots, r$.  Furthermore, the canonical divisor of $X$ is 
$$
\K_X = - \sum_{i=1}^r D_i \text{.}
$$ 
In particular, the  anticanonical divisor 
$$
-\K_X = \sum_{i=1}^r D_i
$$
is a strict normal crossings (snc) divisor.

\np  In general, fixing a divisor
$
D = \sum_{i = 1}^r a_i D_i \text{, }
$
with $a_i \in \RR$, we obtain its  polyhedron
$$
\mathrm{P}_D := \{ m \in \M_{\RR} : \langle m , v_i \rangle \geq - a_i \text{, for $i = 1, \dots, r$} \}\text{.}
$$
A key point to the theory, is that the lattice points of $\mathrm{P}_D$, that is the set
$$
\mathrm{P}_D \cap \M \text{, }
$$
give a torus invariant basis of $\H^0(X, \Osh_X(D))$.

\np  We may construct an \emph{admissible flag} $Y_\bullet$, in the sense of \cite[Section 1]{Lazarsfeld:Mustata:2009}, that consists of torus invariant subvarieties of $X$.  The idea is to fix an ordering of the prime torus invariant divisors in such a way that
$$
Y_i = D_1 \cap \dots \cap D_i \text{, }
$$
for $i \leq d$.  

\np  Indeed, in this way, we obtain a flag
$$
Y_\bullet \colon X = Y_0 \supseteq Y_1 \supseteq \hdots \supseteq Y_{d-1} \supseteq Y_d = \{ \mathrm{pt} \}\text{,}
$$
of irreducible (torus invariant) subvarieties of $X$, which has the two properties that 
\begin{enumerate}
\item[(i)]{ 
$\operatorname{codim}_X(Y_i) = i$; and
}
\item[(ii)]{ each $Y_i$ is nonsingular at the point $\{\mathrm{pt}\}$.
}
\end{enumerate}

\np  Now, following \cite[Section 6.1]{Lazarsfeld:Mustata:2009}, to relate the Okounkov body of $\Delta(D)$, for $D$ a big divisor on $X$, to its polytope $\mathrm{P}_D$, note that the ray generators $v_1,\dots, v_d$ determine a  maximal cone $\sigma$, in $\Sigma$; they also determine an isomorphism
$$
\phi_{\RR} \colon \M_{\RR} \xrightarrow{\sim} \RR^d \text{,}
$$
which is defined by
$$
u \mapsto \left( \langle u , v_i \rangle \right)_{1 \leq i \leq d} \text{.}
$$

\np  Fixing a big Cartier divisor $D$ on $X$, we may choose such a torus invariant flag $Y_\bullet$ with the property that the restriction of $D$ to $U_\sigma$, the affine open subset determined by $\sigma$ is trivial
$$
D|_{U_\sigma} = 0 \text{.}
$$
With these conventions, as noted in \cite[Proposition 6.1]{Lazarsfeld:Mustata:2009}, the Okounkov body $\Delta(D)$ of $D$ with respect to the flag $Y_\bullet$ relates to the polytope $\mathrm{P}_D$ via the rule
$$
\Delta(D) = \phi_{\RR}(\mathrm{P}_D) \text{.}
$$
Finally for later use, we recall that
$$
\operatorname{Vol}_X(D) = d! \operatorname{Vol}_{\mathbb{R}^d}(\mathrm{P}_D) \text{.}
$$

\section{Expected order of vanishing along torus invariant divisors}\label{Torus:expected:order:vanishing}

\np  Throughout this section
$
X = X(\Sigma)
$
denotes a smooth projective toric variety defined over an algebraically closed field $\kk$.  We also fix $E$ a torus invariant prime divisor over $X$ and
\begin{equation}\label{expect:eqn2}
\pi \colon X' \rightarrow X
\end{equation}
a nonsingular model of $X$ with the property that
$
E \subseteq X' \text{.}
$  

\np  
Recall, that such divisors $E$ determine torus invariant valuations on the function field $\kk(X)$.  Moreover, $E$ may be a prime divisor on $X$; in that case, the morphism \eqref{expect:eqn2}, is simply the identity morphism.  Finally, note that $E$ may be interpreted as a (prime) birational divisor, in the sense of \cite[Section 4]{Ru:Vojta:2016}, over $X$.

\np  Our main goal is to describe a combinatorial algorithm to compute the  asymptotic volume constant
\begin{equation}\label{expect:eqn4}
\beta(L,E) := \int_0^\infty \frac{ \Vol(\pi^*L -t E)}{\Vol(L) } \mathrm{d}t
\end{equation}
for
$
L = \Osh_X(D)
$
a big line bundle on $X$, determined by a big torus invariant divisor on $X$, and $E$ a torus invariant prime divisor over $X$.

\np  We may assume that the morphism \eqref{expect:eqn2} is obtained as a composition of toric blowing-up morphisms.  What is the same, the fan $\Sigma'$ of $X'$ is obtained from $\Sigma$, the fan of $X$, by way of  successive star subdivisions.

 \np\label{PP2:star:subdivision} 
{\bf  Example.}  Consider the divisor $E$ over the projective plane $\PP^2$ which is obtained by way of blowing-up a torus invariant point.  In more detail, the fan $\Sigma$ that corresponds to $\PP^2$ has primitive ray generators
$$
v_1 = (1,0), v_2 = (0,1) \text{ and } v_3 = (-1,-1) \text{.}
$$
Let 
$
\{\mathrm{pt}\} \in S = \PP^2
$
be the torus invariant point that corresponds to the maximal cone which is determined by the primitive ray vectors $v_1$ and $v_3$.  The fan $\Sigma'$ which determines
$$
\pi \colon S' = \operatorname{Bl}_{\{\mathrm{pt}\}}(S) \rightarrow S \text{, }
$$
the monoidal transformation of $S$ with centre $\{\mathrm{pt}\}$ has primitive ray generators
$$
v_0' = (0,-1) \text{, } v_1' = (1,0) \text{, } v_2' = (0,1) \text{ and } v_3' = (-1,-1) \text{. }
$$
The exceptional divisor $E$ corresponds to $v_0'$.   

Moreover, let $D_i'$ denote the prime torus invariant divisor that corresponds to $v_i'$, for $i = 1,2$ and $3$.  Then the pullbacks of the torus invariant prime divisors $D_i$, for $i = 1,2$ and $3$, which correspond to the primitive ray vectors $v_1,v_2$ and $v_3$ on $\PP^2$ are given by
$$
\text{
$
\pi^* D_1 = D_1' + E
$,
$
\pi^* D_2 = D_2'
$
and
$ 
\pi^* D_3 = D_3' + E \text{.}
$
}
$$

\np  We now describe a combinatorial algorithm to compute the asymptotic volume constant
\begin{equation}\label{expect:eqn4}
\beta(L,E) := \int_0^\infty \frac{ \Vol(\pi^*L -t E)}{\Vol(L) } \mathrm{d}t
\end{equation}
for
$
L = \Osh_X(D)
$
a big line bundle on $X$, determined by a big torus invariant divisor on $X$, and $E$ a torus invariant prime divisor over $X$.  In particular, our approach here allows for calculation of the quantities \eqref{expect:eqn4} for finite sequences of toric blowing-up morphisms over the (relatively) minimal rational surfaces $\PP^2$, $\PP^1\times \PP^1$ and $\mathbb{F}_r$, for $r \geq 2$.

\np  Let $\gamma_{\mathrm{eff}}$ denote the pseudoeffective threshold of $L$ along $E$
$$
\gamma_{\mathrm{eff}}(L,E) = \gamma_{\mathrm{eff}} := \sup \{ t : \pi^*L - t E \text{ is effective}\} \text{.}
$$
Then the quantity \eqref{expect:eqn4}  takes the form
\begin{equation}\label{expect:eqn5}
\beta(L,E) := \int_0^{\gamma_{\mathrm{eff}}} \frac{ \Vol(\pi^*L - t E)}{\Vol(L) } \mathrm{d}t \text{.}
\end{equation}

\np  Our approach to determining the quantity \eqref{expect:eqn5} may be seen as a special case of our more general results.  The idea is to study the polyhedra
$
\mathrm{P}_{\pi^*L - t E}\text{,} 
$
\text{for $0 \leq t \leq \gamma_{\mathrm{eff}}$.}

To begin with, let $v_0',\dots,v_{r}'$ be the primitive ray vectors for the fan $\Sigma'$ and suppose that $E$ corresponds to $v_0'$.  We may write $\pi^*L$ in the form 
\begin{equation}\label{expect:eqn6}
\pi^* L = \sum_{i=1}^{r} a_i D_i' + a_0 E.
\end{equation}

\np  The polyhedra
$$
\mathrm{P}_{\pi^*L - t E} \subseteq \mathrm{P}_{\pi^* L}
$$
are then cut out by the system of inequalities, for $m \in \M_\RR$,
$$
\text{
$
m \cdot v_0' \geq   -a_0 + t \text{; } 
$
$
m \cdot v_1' \geq -a_1 \text{; } 
$
\text{ \dots }; 
and
$   
m \cdot v_r' \geq - a_{r} 
$.
}
$$

\np Note that a bound for $\gamma_{\mathrm{eff}}$ may be obtained from the polyhedron $\mathrm{P}_{\pi^*L}$.  Furthermore, the expected order of vanishing of $L$ along $E$, especially the quantity \eqref{expect:eqn5}, is then obtained via the knowledge of the volumes of the $\mathrm{P}_{\pi^*L - t E}$.  Explicitly, recall that
\begin{equation}\label{expect:eqn9}
\operatorname{Vol}_{X'}(\pi^*L - t E) = d! \operatorname{Vol}_{\mathbb{R}^d}(\mathrm{P}_{\pi^*L - t E})
\end{equation}
whereas 
\begin{equation}\label{expect:eqn10}
\operatorname{Vol}_X(L) = d! \operatorname{Vol}_{\mathbb{R}^d}(\mathrm{P}_{\pi^* L})\text{.}
\end{equation}
Combining \eqref{expect:eqn9} and \eqref{expect:eqn10}, we may rewrite the expected order of vanishing, in other words the quantity \eqref{expect:eqn5}, in the form
$$
\beta(L,E) = \int_0^{\gamma_{\mathrm{eff}}} \frac{ \operatorname{Vol}_{\mathbb{R}^d}(\mathrm{P}_{\pi^*L - t E}) }{ \operatorname{Vol}_{\mathbb{R}^d}(\mathrm{P}_{\pi^*L}) } \mathrm{d}t \text{.}
$$

\np {\bf Example.} Here we treat the case of a torus invariant point
$x = \{\mathrm{pt} \} \in \PP^2$
and an ample line bundle
$L = \Osh_{\PP^2}(a)$, for $a > 0$.  Our conventions about star subdivisions are as in \S \ref{PP2:star:subdivision}.  By linear equivalence, it suffices to determine
$$
\beta(a \pi^* D_2, E) = \beta(a D_2' , E) \text{.}
$$
The polyhedra $\mathrm{P}_{a D_2}$ is a triangle in the fourth quadrant.  This  triangle has area 
$$
\operatorname{Vol}_{\RR^2}(\mathrm{P}_{a D_2}) = \frac{1}{2}a^2.
$$
Similarly it follows that
$$
\operatorname{Vol}_{\RR^2}( \mathrm{P}_{a D'_2 - t E}) = \frac{1}{2}(a^2 - t^2) 
$$
for $0 \leq t \leq a$.
In particular, it follows that
$$
\beta_x(L) = \int_0^a \frac{ a^2 - t^2 }{ a^2 } \mathrm{d}t = \frac{2}{3}{a} \text{,}
$$
compare with \cite[Section 4]{McKinnon-Roth}.

\np\label{P1:P1:point:example} {\bf Example.}  Next we compute $\beta_x(L)$ for a torus invariant point
$x = \{\mathrm{pt}\} \in \PP^1 \times \PP^1$
and $L$ an ample line bundle.  Our aim is to study the divisors
$$
a \pi^* D_3 + b \pi^* D_4 - t E \text{, }
$$
for 
$a,b \in \ZZ_{>0}$ 
and 
$t \in \RR_{\geq 0}\text{,}$ 
on 
$$
\pi \colon S' = \operatorname{Bl}_{\{\mathrm{pt} \}}(S) \rightarrow S = \PP^1 \times \PP^1
$$
the blowing-up of $S$ at the torus invariant point $\{\mathrm{pt}\} \in S$ which corresponds to the maximal cone which is determined by $v_2$ and $v_3$.  Here, our conventions for the minimal ray generators for $\PP^1 \times \PP^1$ are such that 
$$\text{$v_1 = (-1,0)$, $v_2 = (0,1)$, $v_3 = (1,0)$ and $v_4 = (0,-1)$.}$$

 The goal is to use the polyhedrons 
$$
\mathrm{P}_{a \pi^* D_3 + b \pi^* D_4 - t E} = \phi^{-1}(\Delta \left(a \pi^* D_3 + b \pi^* D_4 - t E) \right)\text{, }
$$ 
or equivalently their interpretation in terms of Okounkov bodies, to determine the nature of the functions
$$
f(t) := \frac{\Vol_{S'}\left( a \pi^* D_3 + b \pi^* D_4 - t E \right)}{\Vol_{S'} \left( a \pi^* D_3 + b \pi^* D_4  \right)} \text{.}
$$
In doing so, we recover similar calculations which were obtained in \cite[Section 4]{McKinnon-Roth}.

Henceforth, we assume that $a \leq b$.  Rewriting, the divisor
$$
a \pi^* D_3 + b \pi^* D_4 \text{, }
$$
in terms of the torus invariant prime divisors on $S'$, we obtain that
\begin{equation}\label{PP1:PP1:pullback:eqn1}
a \pi^* D_3 + b \pi^* D_4 = a D_3' + a E + b D'_4.
\end{equation}

It follows from \eqref{PP1:PP1:pullback:eqn1} that the inequalities which govern the polyhedron
$
\mathrm{P}_{a D_3' + b D_4' + a E}
$
are then given by the following five inequalities
\begin{equation*}
\begin{matrix}
m \cdot v_0'  & = & m_1  & + & m_2 &   \geq & - a \\
m\cdot v_1'  & = & -m_1  & + & 0   &  \geq  & 0 \\
m \cdot v_2'  & = & 0 & + & m_2  & \geq & 0 \\
m \cdot v_3' & = & m_1  & + & 0   & \geq & - a \\
m \cdot v_4'  & = & 0  & - & m_2  & \geq  & - b \text{.}
\end{matrix}
\end{equation*}
In terms of the divisor
$$
a \pi^*D_3 + b \pi^* D_4 - t E = a D'_3 + b D_4' + (a- t) E 
$$
the inequalities that govern its polyhedron 
$$
\mathrm{P}_{a \pi^*D_3 + b \pi^* D_4 - t E} = \mathrm{P}_{a D'_3 + b D_4' +(a - t) E}
$$
are
\begin{equation*}
\begin{matrix}
m \cdot v_0'  & = & m_1  & + & m_2 &   \geq & - a + t \\
m\cdot v_1'  & = & -m_1  & + & 0   &  \geq  & 0 \\
m \cdot v_2'  & = & 0 & + & m_2  & \geq & 0 \\
m \cdot v_3' & = & m_1  & + & 0   & \geq & - a \\
m \cdot v_4'  & = & 0  & - & m_2  & \geq  & - b \text{.}
\end{matrix}
\end{equation*}

There are three cases to consider
\begin{enumerate}
\item[(i)]{
the case that
$0 \leq t \leq a \text{; }
$
}
\item[(ii)]{
the case that
$
a \leq t \leq b \text{; and}
$
}
\item[(iii)]{
the case that
$
t > b \text{.}
$
}
\end{enumerate}
Henceforth, we also set
$
\gamma_{\mathrm{eff}} = a + b \text{.}
$

To begin with, we observe that
$$
\Vol_{\RR^2}(\mathrm{P}_{a \pi^* D_3 + b \pi^* D_4}) = \Vol_{\RR^2}( \mathrm{P}_{a D_3 + b D_4}) =  a b.
$$
Then, by determining the area of the polyhedrons 
$$
\mathrm{P}_{a \pi^*D_3 + b \pi^* D_4 - t E}\text{, }
$$
in each of the above three cases, we obtain that 
$$
f(t) 
=  
\begin{cases}
1 - \frac{t^2}{2 a b} & \text{for $0 \leq t \leq a$}  \\
\frac{1}{ab}\left(ab + \frac{1}{2} a^2 - t a \right)  & \text{for $a \leq t \leq b$} \\
\frac{(a+b - t)^2 }{2 a b }& \text{for $b \leq t \leq a + b$.}
\end{cases}
$$

In particular, we recover the calculations which were obtained in \cite[Section 4]{McKinnon-Roth}.  By integrating $f(t)$ over the interval $[0,a+b]$, we determine the asymptotic volume constant 
\begin{align*}
\beta(aD_3 + b D_4, E)&  := \int_0^{\infty} \frac{\Vol(a \pi^* D_3 + b \pi^* D_4 -t E ) }{ \Vol(a D_3 + b D_4) } \mathrm{d}t \\
& =
\frac{a + b}{2}
\end{align*}
as in \cite[Section 4]{McKinnon-Roth}.

\section{A gcd bound for $\PP^1 \times \PP^1$ blown-up along a torus invariant point}\label{P1:P1:blow:up:gcd}

\np  The aim of this section, is to establish an unconditional bound for the generalized greatest common divisor of pairs of nonzero algebraic numbers.  In what follows, we continue to adopt the notation of Example \ref{P1:P1:point:example}.

\np\label{intersect:prop:eqn1}  Let $p$ and $q$ denote the projections of $S = S(\Sigma) = \PP^1 \times \PP^1$ onto the first and second factors respectively.  Recall, that the anticanonical divisor class is of type $(2,2)$ on $\PP^1 \times \PP^1$.  In particular, by linear equivalence, we may write the anticanonical divisor $-\K_{S(\Sigma)}$ in the form
$$
-\K_{S(\Sigma)} \sim p^*H_1 + p^*H_2 + q^*F_1 + q^*F_2
$$
where the divisors $p^*H_1$, $p^*H_2$, $q^*F_1$, $q^*F_2$ have the property that their pullback to $S' = S(\Sigma') $ intersect properly.

\np\label{intersect:prop:eqn2}  Thus, using linear equivalence, we may write
\begin{equation}\label{pull:back:anti:can:intersect:properly}
- \pi^* \K_{S(\Sigma)} = H'_1 + H_2' + F'_1 + F'_2
\end{equation}
where $H_i'$ denotes the pullback of the type $(1,0)$ divisors $p^*H_i$, for $i = 1,2$, and where $F_i'$ denotes, similarly, the pullback of the type $(0,1)$ divisors $q^*F_i$.

\np  In particular, we may apply Theorem \ref{unconditional:arithmetic:general:theorem:gcd:bound} with respect to the asymptotic volume constants 
$$\beta(-\K_{S(\Sigma')}, H_i')$$ 
and 
$$\beta(-\K_{S(\Sigma')}, F_i')\text{,}$$ 
for $i = 1,2$.  To this end, by linear equivalence, it suffices to determine the nature of the 
$$\beta(-\K_{S(\Sigma')}, \pi^*D_i)\text{,}$$ 
for $i = 1,2,3,4$.

\np  With this end in mind, here we discuss the polyhedron for the ample divisor
$$
- \K_{S'} = - \pi^* \K_S - E = D_1' + D_2' + D_3' + D_4' + E \text{.}
$$
Note that by solving for 
$$- \pi^* \K_S = - \K_{S'} + E \text{,}$$
we obtain
\begin{align*}
- \pi^* \K_S  &
= - \K_{S'} + E 
\\
&
= D_1' + D_2' + D_3' + D_4' + 2 E \\
 &
= D_1' + (D_2' + E) + (D_3' + E) + D_4' \\
& = \pi^* D_1 + \pi^* D_2 + \pi^* D_3 + \pi^* D_4 \text{.}
\end{align*}

\np  We now determine the asymptotic volume constants
$$
\beta(-\K_{S'}, \pi^* D_i) := \int_0^\infty \frac{\Vol_{S'}(-\K_{S'} - t \pi^* D_i ) }{ \Vol_{S'}(-\K_{S'}) } \mathrm{d}t 
$$
for $i = 1,2,3$ and $4$.  We have the known lower bound
$$
\beta(- \K_{S'}, \pi^* D_i) \geq \frac{7}{8} \text{, }
$$
compare with 
\cite[Corollary 1.11]{Ru:Vojta:2016} and \cite[Proposition 12]{Guo:Wang:2017}.

\np  First, we note that the inequalities which determine $\mathrm{P}_{-\K_{S'}}$ are given by
\begin{equation*}
\begin{matrix}
m \cdot v_0'  & = & m_1  & + & m_2 &   \geq & - 1 \\
m\cdot v_1'  & = & -m_1  & + & 0   &  \geq  & -1 \\
m \cdot v_2'  & = &0 & + & m_2  & \geq & -1 \\
m \cdot v_3' & = & m_1  & + & 0   & \geq & - 1 \\
m \cdot v_4'  & = & 0  & - & m_2  & \geq  & \ - 1 \text{.}
\end{matrix}
\end{equation*}
It follows that the polyhedron $\mathrm{P}_{-\K_{S'}}$ has area equal to 
$$
\Vol_{\RR^2}(\mathrm{P}_{-\K_{S'}}) = 1 + 1 + 1 + \frac{1}{2} = \frac{7}{2}
$$
and, furthermore, that
$$
\Vol(-\K_{S'}) = 2 + 2 + 2 + 1 = 7.
$$

\np  Next, we discuss the divisors 
$
- \K_{S'} - t \pi^* D_i \text{, }
$
for $i = 1,2,3$ and $4$.  To begin with, recall that
$$
\pi^* D_3 = D_3' + E \text{, } \pi^* D_2 = D_2' + E \text{, }
\pi^* D_1 = D_1' \text{ and } \pi^*D_4 = D_4' \text{.}
$$

\np  We observe Proposition \ref{beta:gcd:prop} below.
\begin{proposition}\label{beta:gcd:prop}
Let 
$\pi \colon S' = \operatorname{Bl}_{\{\mathrm{pt} \}}(S) \rightarrow S = \PP^1 \times \PP^1$ 
be the blowing-up at a torus invariant point 
$
\{\mathrm{pt}\} \in S
$
with exceptional divisor $E$.  Let $D_i$, for $i = 1,\dots,4$, be the prime torus invariant divisors determined by the primitive ray vectors in $\Sigma$ the fan of $S$.  Then, in this context, it holds true that
$$ 
\beta(-\K_{S'}, \pi^* D_i)
= \int_0^{\infty} \frac{\Vol_{S'}(-\K_{S'} - t \pi^* D_i ) }{ \Vol_{S'}(-\K_{S'}) } \mathrm{d}t = \frac{19}{21} \text{.}
$$
\end{proposition}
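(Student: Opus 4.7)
The plan is to reduce the claim to a single representative case via linear equivalence and symmetry on $S = \PP^1 \times \PP^1$, and then to carry out the required volume calculation using the polyhedral set-up from Section \ref{Toric:Calculations}.

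First, I would observe that on $S$ one has $D_1 \sim D_3$ and $D_2 \sim D_4$, since within each pair both divisors are prime fibers of the same projection. Pulling back gives $\pi^* D_1 \sim \pi^* D_3$ and $\pi^* D_2 \sim \pi^* D_4$, so the volumes $\Vol_{S'}(-\K_{S'} - t \pi^* D_i)$, and hence the integrals defining $\beta(-\K_{S'}, \pi^* D_i)$, agree within each pair. Furthermore, the involution of $S$ that swaps the two $\PP^1$-factors fixes the blown-up point $\{\mathrm{pt}\} = D_2 \cap D_3$, and therefore lifts to an automorphism of $S'$ exchanging $\pi^* D_3$ and $\pi^* D_2$. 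Combining these two observations, all four constants $\beta(-\K_{S'}, \pi^* D_i)$ coincide, so it suffices to handle the single case $i = 3$.

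Next, using $\pi^* D_3 = D_3' + E$ from \S \ref{P1:P1:point:example}, I would expand
$$-\K_{S'} - t \pi^* D_3 = D_1' + D_2' + (1-t) D_3' + D_4' + (1-t) E$$
and read off the defining inequalities of $\mathrm{P}_{-\K_{S'} - t \pi^* D_3}$ by pairing $m \in \M_\RR$ against each primitive ray vector $v_0', \dots, v_4'$. Inspecting this system identifies the pseudoeffective threshold as $\gamma_{\mathrm{eff}} = 2$. I would then determine $\Vol_{\RR^2}(\mathrm{P}_{-\K_{S'} - t \pi^* D_3})$ as a piecewise polynomial in $t$, with a transition at $t = 1$ corresponding to the moment the coefficient of $D_3'$ passes through zero. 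Integrating this piecewise volume over $[0, 2]$ and dividing by the value $\Vol_{\RR^2}(\mathrm{P}_{-\K_{S'}}) = 7/2$ recorded in the preceding discussion should produce the announced number $19/21$.

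The principal obstacle is the case analysis in this last step: one must carefully track how the region of the $-\K_{S'}$-polyhedron excised near the corner $(t-1, -1)$ changes shape, passing from a fixed-area triangle for $t \in [0, 1]$ to a trapezoid whose area grows with $t$ for $t \in [1, 2]$. Once the correct piecewise volume formula is secured, the remaining integration is elementary and aligns with the Okounkov body calculations already carried out in Example \ref{P1:P1:point:example}.
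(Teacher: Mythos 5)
Your plan is correct and follows essentially the same route as the paper: reduce to a single representative via linear equivalence and the factor-swapping symmetry, write down the inequalities cutting out $\mathrm{P}_{-\K_{S'} - t\pi^*D_i}$, compute the piecewise-polynomial area with a break at $t=1$, and integrate over $[0,2]$ to get $\frac{19}{21}$. The paper instead reduces to $i=4$ (so $\pi^*D_4 = D_4'$ carries no $E$-term and only one inequality moves with $t$) and tallies the area quadrant by quadrant, but the geometry and the answer are the same.

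One small descriptive slip in your final paragraph: for $t \in [1,2]$ the excised corner trapezoid has area $\tfrac{t}{2}(2-t)$, which decreases from $\tfrac12$ at $t=1$ to $0$ at $t=2$, so it does not grow with $t$; it is the total polytope area $\tfrac{(2-t)(4-t)}{2}$ that shrinks monotonically to zero, as it must at the pseudoeffective threshold. This does not affect the computation, which, carried out carefully, yields $\int_0^2 \Vol_{\RR^2} = \tfrac{5}{2} + \tfrac{2}{3} = \tfrac{19}{6}$ and hence $\beta = \tfrac{19/6}{7/2} = \tfrac{19}{21}$.
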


\begin{proof}
By linear equivalence and symmetry, 
it suffices to consider the case that $i = 4$ and then determine the areas of the polyhedra $\mathrm{P}_{-\K_{S'} - t \pi^* D_4}$.  With this in mind, we will show that 
$$
\operatorname{Vol}_{\RR^2}(\mathrm{P}_{-\K_{S'} - t \pi^* D_4})  =
\begin{cases}
\frac{7}{2} - 2t & \text{ for $0 \leq t \leq 1$} \\
\frac{1}{2}(2-t)^2 + 2 - t & \text{ for $1 \leq t \leq 2$.}
\end{cases}
$$

To determine the area of $\mathrm{P}_{-\K_{S'} - t \pi^*D_4}$, we need to determine the area 
$\operatorname{Vol}_{\RR^2}(\mathrm{P}_{-\K_{S'} - t \pi^*D_4})\text{,}$
for $0 \leq t \leq 2$, of the region that is bounded by the five inequalities
\begin{equation*}
\begin{matrix}
m_1  & + & m_2 &   \geq & - 1 \\
 -m_1  & + & 0   &  \geq  & -1 \\
0 & + & m_2  & \geq & -1 \\
m_1  & + & 0   & \geq & - 1 \\
0  & - & m_2  & \geq  & - 1 + t \text{.}
\end{matrix}
\end{equation*}

For $1 \leq t \leq 2$, we study the fourth and third quadrants.  The areas of the regions that are bounded by these inequalities may be described in the following way
\begin{itemize}
\item{
the region in the fourth quadrant that is bounded by these inequalities has area equal to 
$
2-t;
$
}
\item{
the region in the third quadrant that is bounded by these inequalities has area equal to
$
\frac{1}{2}(2-t)^2.
$
Combining, it follows that
$$
\operatorname{Vol}_{\RR^2}(\mathrm{P}_{-\K_{S'} - t \pi^* D_4}) = \frac{1}{2}(2-t)^2 + 2 - t
\text{, }
$$
for $1 \leq t \leq 2$.
}
\end{itemize}
For $0 \leq t \leq 1$, we note
\begin{itemize}
\item{
the region in the first quadrant that is bounded by these inequalities has area equal to
$
1-t;
$
}
\item{
the region in the second quadrant that is bounded by these inequalities has area equal to
$
1-t;
$
}
\item{
the region in the third quadrant that is bounded by these inequalities has  area equal to
$
\frac{1}{2};
$
}
\item{
the region in the fourth quadrant that is bounded by these inequalities has area equal to 
$1$.
}
\end{itemize}
In sum, it follows that
$$
\operatorname{Vol}_{\RR^2}(\mathrm{P}_{-\K_{S'} - t \pi^* D_4}) = \frac{7}{2} - 2t \text{,}
$$
for $0 \leq t \leq 1$.

We now calculate the expected order of vanishing of $-\K_{S'}$ along $\pi^* D_4$.   
To begin with, 
set
$$
f(t) = \frac{\Vol_{S'}(-\K_{S'} - t \pi^* D_4)}{\Vol_{S'}(-\K_{S'})}.
$$
Then  
$$
\Vol_{S'}(-\K_{S'}) = 7
$$
and 
$$
f(t) = \frac{2}{7} \operatorname{Vol}_{\RR^2}(\mathrm{P}_{-\K_{S'} - t \pi^* D_4}).
$$
Thus, it then follows that
$$
\int_0^2 f(t) \mathrm{d}t = \frac{2}{7}\left( \frac{19}{6} \right) = \frac{19}{21}\text{,}
$$
as is desired by Proposition \ref{beta:gcd:prop}.
\end{proof}

\np  Next, we combine Proposition \ref{beta:gcd:prop} and Theorem \ref{unconditional:arithmetic:general:theorem:gcd:bound} to obtain a gcd type bound for the case of $\PP^1 \times \PP^1$ blown-up along a torus invariant point.  The idea is to apply Theorem \ref{unconditional:arithmetic:general:theorem:gcd:bound}.  In particular, we first note that a consequence of Proposition \ref{beta:gcd:prop} is the fact that
$$
 \beta(-\K_{S(\Sigma')},\pi^*D_i) = \frac{19}{21} \text{, }
$$
for $i = 1,\dots, 4$.  Next, as mentioned in Subsections \ref{intersect:prop:eqn1} and \ref{intersect:prop:eqn2}, if we use linear equivalence to write the anticanonical divisor in the form
$$
-\K_{S(\Sigma)} = p^*H_1 + p^* H_2 + q^* F_1 + q^* F_2 \text{,}
$$
it follows that the divisors 
$H_1', H_2', F_1', F_2'$ intersect properly.

\np
Hencefourth, we put
$$
\gamma := \frac{21}{19}
$$
\text{ 
and 
}
$$
\delta := \gamma - 1 = \frac{21}{19} - 1 
= \frac{2}{19}\text{.}
$$
In our present context, Theorem \ref{unconditional:arithmetic:general:theorem:gcd:bound} takes the following form.

\begin{theorem}\label{gcd:bound:intro:thm'}
Let $\kk$ be a number field, $M_\kk$ its set of places and $S \subseteq M_\kk$ a finite subset.  Let 
$S(\Sigma) := \PP^1_\kk \times \PP^1_\kk$ 
and
$ 
\pi \colon S(\Sigma') \rightarrow S(\Sigma)
$ 
the blowing-up of $S(\Sigma)$ at a torus fixed point 
$\{\mathrm{pt} \} \in S(\Sigma)$ 
with exceptional divisor $E$.  Let $\epsilon > 0$ and put $\delta =  \frac{2}{19} $.  Then there exists a proper Zariski closed subset 
$Z' \subsetneq S(\Sigma')$ 
with the property that
\begin{equation}\label{Silverman:gcd:inequality}
h_E(x') \leq  \frac{1}{1 + \delta + \epsilon} \left( ( \delta + \epsilon)  h_{-\pi^* \K_{S(\Sigma)}}(x') +  \left( \sum_{v \in M_\kk \setminus S} \lambda_{\pi^* \K_{S(\Sigma)}}(x') \right) \right) + \mathrm{O}(1)
\end{equation}
for all $\kk$-rational points  $x' \in S(\Sigma')(\kk) \setminus Z'(\kk)$.
\end{theorem}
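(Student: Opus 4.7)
The plan is to reduce the assertion to a direct application of Theorem \ref{unconditional:arithmetic:general:theorem:gcd:bound}, taking $X = S(\Sigma)$, $X' = S(\Sigma')$, $Y = \{\mathrm{pt}\}$ the blown-up torus fixed point, and consequently $r = 2$. First I would verify the hypotheses of that theorem: $S(\Sigma) = \PP^1_\kk \times \PP^1_\kk$ is a smooth projective surface and $Y$ is a smooth closed subvariety of codimension $r = 2 \geq 2$; the anticanonical class $-\K_{S(\Sigma)}$ has bidegree $(2,2)$, hence is ample and in particular big, which delivers condition (ii) of Theorem \ref{unconditional:arithmetic:general:theorem:gcd:bound}.

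For condition (i), I would invoke the decomposition recorded in \S\ref{intersect:prop:eqn1}--\S\ref{intersect:prop:eqn2}, namely
$$
-\K_{S(\Sigma)} \sim p^*H_1 + p^*H_2 + q^*F_1 + q^*F_2,
$$
where $H_1,H_2$ (respectively $F_1,F_2$) are chosen as distinct $\kk$-rational points on the first (respectively second) $\PP^1$-factor, selected so that their pullbacks $H_i',F_j'$ to $S(\Sigma')$ avoid the exceptional divisor $E$ and thereby intersect properly on $S(\Sigma')$. This is the one place where linear equivalence genuinely matters: the torus-invariant representatives $\pi^*D_2$ and $\pi^*D_3$ each contain $E$ as a component (as recorded in Example \ref{P1:P1:point:example}), so their local defining equations cannot form a regular sequence; passing to generic representatives in the appropriate linear systems restores proper intersection while preserving the sum.

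Next, I would compute the asymptotic volume constants $\beta(-\K_{S(\Sigma')}, H_i')$ and $\beta(-\K_{S(\Sigma')}, F_j')$. Since $\beta$ depends only on the line bundle class of its second argument, and since each $H_i'$ is linearly equivalent to $\pi^*D_1$ (type $(1,0)$) while each $F_j'$ is linearly equivalent to $\pi^*D_4$ (type $(0,1)$), Proposition \ref{beta:gcd:prop} immediately yields
$$
\beta(-\K_{S(\Sigma')}, H_i') \;=\; \beta(-\K_{S(\Sigma')}, F_j') \;=\; \frac{19}{21}
$$
for $i,j \in \{1,2\}$. Consequently $\gamma := \max_{1 \leq j \leq 4} 1/\beta(-\K_{S(\Sigma')}, \cdot) = 21/19$, and the choice $\delta = 2/19$ satisfies $\gamma \leq 1 + \delta$, so condition \eqref{gcd:eqn5} holds with equality.

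The final step is to substitute these data into the conclusion \eqref{gcd:eqn12'} of Theorem \ref{unconditional:arithmetic:general:theorem:gcd:bound}. With $r = 2$ the prefactor $1/(r-1) = 1$ disappears, and the claimed inequality \eqref{Silverman:gcd:inequality} emerges, the sign convention on the local terms from $M_\kk \setminus S$ being reconciled via $\lambda_{-\pi^*\K_{S(\Sigma)}} = -\lambda_{\pi^*\K_{S(\Sigma)}}$ modulo $\mathrm{O}(1)$, which is absorbed into the overall error. The exceptional Zariski-closed subset $Z' \subsetneq S(\Sigma')$ is precisely the one furnished by Theorem \ref{unconditional:arithmetic:general:theorem:gcd:bound}. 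There is no serious obstacle in this plan beyond the linear-equivalence bookkeeping between the torus-invariant representatives (where $\beta$ is explicitly computable in terms of polytope volumes) and the generic representatives (which are required for proper intersection of pullbacks); the analytical weight resides entirely in the Main Arithmetic General Theorem \cite{Ru:Vojta:2016} underlying Theorem \ref{unconditional:arithmetic:general:theorem:gcd:bound}, and the combinatorial weight in the polytope computation already carried out in Proposition \ref{beta:gcd:prop}.
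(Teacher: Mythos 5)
Your proposal follows precisely the paper's own route: both reduce the claim to Theorem \ref{unconditional:arithmetic:general:theorem:gcd:bound} with $X = S(\Sigma)$, $Y = \{\mathrm{pt}\}$, $r=2$; both verify hypothesis (i) via the representative $-\K_{S(\Sigma)} \sim p^*H_1 + p^*H_2 + q^*F_1 + q^*F_2$ whose pullbacks intersect properly on $S(\Sigma')$; both use Proposition \ref{beta:gcd:prop}, together with invariance of $\beta$ under linear equivalence, to obtain $\gamma = 21/19$ and $\delta = 2/19$; and both then read off inequality \eqref{gcd:eqn12'}. One small point worth flagging that you touch on: substituting $r=2$, $\delta = 2/19$ into \eqref{gcd:eqn12'} produces $-\sum_{v\in M_\kk\setminus S}\lambda_{\pi^*\K_{S(\Sigma)}}(v,x')$ inside the parenthesis, which by $\lambda_{-D,v} = -\lambda_{D,v} + \mathrm{O}(1)$ equals $+\sum_{v\in M_\kk\setminus S}\lambda_{-\pi^*\K_{S(\Sigma)}}(v,x')$ up to $\mathrm{O}(1)$; the plus sign as printed in \eqref{Silverman:gcd:inequality} should therefore be attached to $\lambda_{-\pi^*\K_{S(\Sigma)}}$ rather than $\lambda_{\pi^*\K_{S(\Sigma)}}$ (an apparent typo in the statement of the theorem), and your derivation correctly tracks this even though the sentence explaining the reconciliation is phrased a bit loosely.
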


\begin{proof}
Because of our discussions that precede Theorem  \ref{gcd:bound:intro:thm'},  the hypothesis of Theorem \ref{unconditional:arithmetic:general:theorem:gcd:bound} is satisfied, writing $-\pi^*\K_{S(\Sigma)}$ as in \eqref{pull:back:anti:can:intersect:properly}, and moreover, in  \eqref{gcd:eqn12'}, we may put $\delta = 
\frac{2}{19}
$.  In doing so, we obtain the desired inequality \eqref{Silverman:gcd:inequality}.  
\end{proof}

\np{\bf Acknowledgements.}  
This work was conducted while I was a postdoctoral fellow at Michigan State University.  It is my pleasure to thank many colleagues for their interest and discussions on related topics.  Finally, I thank an anonymous referee for carefully reading this article and for offering several helpful comments and suggestions. 

\providecommand{\bysame}{\leavevmode\hbox to3em{\hrulefill}\thinspace}
\providecommand{\MR}{\relax\ifhmode\unskip\space\fi MR }
\providecommand{\MRhref}[2]{%
  \href{http://www.ams.org/mathscinet-getitem?mr=#1}{#2}
}
\providecommand{\href}[2]{#2}

\end{document}